\newtheorem{thm}{Theorem}[section]
\newtheorem{cor}[thm]{Corollary}
\newtheorem{lem}[thm]{Lemma}
\theoremstyle{definition}
\newtheorem{defn}[thm]{Definition}
\theoremstyle{remark}
\newtheorem{rem}[thm]{Remark}
\numberwithin{equation}{section}
\newcommand{\To}{\longrightarrow}
\newcommand{\cC}{\protect{\mbox{\sf Cat}}}
\newcommand{\cB}{\protect{\mbox{\sf Banach}}}
\newcommand{\cM}{\protect{\mbox{\sf Metric}}}
\newcommand{\cL}{\protect{\mbox{\sf BLattices}}}
\newcommand{\R}{\mathbb{R}}
\newcommand{\Q}{\mathbb{Q}}
\newcommand{\Lip}{{\mathrm{Lip}}_0}
\DeclareMathOperator{\dens}{dens}
\DeclareMathOperator{\Obj}{Obj}
\begin{document}
\setcounter{tocdepth}{1}
%\date{\today}

%\renewcommand{\baselinestretch}{1.1}

\title[Local complementation and free constructions]{Local complementation in Banach spaces and its preservation under free constructions}

\author[Avil\'es]{Antonio Avil\'es}
\address[Avil\'es]{Universidad de Murcia, Departamento de Matem\'{a}ticas, Campus de Espinardo 30100 Murcia, Spain
	\newline
	\href{https://orcid.org/0000-0003-0291-3113}{ORCID: \texttt{0000-0003-0291-3113} } }
\email{\texttt{avileslo@um.es}}

\author[Mart\'inez-Cervantes]{Gonzalo Mart\'inez-Cervantes}
\address[Mart\'inez-Cervantes]{Universidad de Murcia, Departamento de Matem\'{a}ticas, Campus de Espinardo 30100 Murcia, Spain
	\newline
\href{http://orcid.org/0000-0002-5927-5215}{ORCID: \texttt{0000-0002-5927-5215} } }	
\email{gonzalo.martinez2@um.es}

\author[Rueda Zoca]{Abraham Rueda Zoca}
\address[Rueda Zoca]{Universidad de Granada, Facultad de Ciencias. Departamento de An\'alisis Matem\'atico, 18071 Granada, Spain
	\newline
	\href{https://orcid.org/0000-0003-0718-1353}{ORCID: \texttt{0000-0003-0718-1353} }}
\email{\texttt{abrahamrueda@ugr.es}}
\urladdr{\url{https://arzenglish.wordpress.com}}

\thanks{This work was partially supported by grants 21955/PI/22, funded by Fundaci\'on S\'eneca - ACyT Regi\'on de Murcia;
	PID2021-122126NB-C32 and PID2021-122126NB-C31 (Rueda Zoca), funded by MCIN/AEI/10.13039/501100011033 and FEDER A way of making Europe;
	CIGE/2022/9 (Martínez-Cervantes and Rueda Zoca), funded by Generalitat Valenciana  and FQM-0185 (Rueda Zoca), funded by Junta de Andaluc\'ia.}

%\thanks{
%Research partially supported by Fundaci\'on S\'eneca - ACyT Regi\'on de Murcia: grant 21955/PI/22 and Agencia Estatal de Investigación and EDRF/FEDER ``A way of making Europe" (MCIN/AEI/10.13039/501100011033) through grants PID2021-122126NB-C32 and PID2021-122126NB-C31 (Rueda Zoca). The research of G. Mart\'inez-Cervantes and A. Rueda Zoca was also supported by Generalitat Valenciana: grant CIGE/2022/97 and by Junta de Andaluc\'ia: grant FQM-0185 (Rueda Zoca).}

\keywords{Free structures; Local complementation; Banach lattice; Free Banach lattice; Lipschitz-free spaces}

\subjclass[2010]{46B07, 46B20, 46B40, 46B42}

\begin{abstract} 
In this work we consider natural generalizations of local complementation in Banach spaces, which include Lipschitz-local complementation.
We show that all these notions are indeed equivalent to 
the classical notion of local complementation of Banach spaces. As an application, we show that local complementation is naturally preserved under certain free constructions in Functional Analysis, including Lipschitz-free spaces and free Banach lattices.
\end{abstract}

\maketitle

\section{Introduction}

The idea of freely generated structures is widespread in different fields of mathematics. Ignoring technicalities for a moment, the basic idea is that if we have two categories $\cC_2 \subset \cC_1$, and an object $M$ of $\cC_1$, a free object generated by $M$ in $\cC_2$ is an object $F[M]$ together with a morphism $\delta\colon M \To F[M]$ in $\cC_1$ with the following universal property:
For every object $X$ in $\cC_2$ and every morphism $f\colon M\To X$ in $\cC_1$ there exists a unique morphism $T_f\colon F[M]\To X$ in $\cC_2$ such that $T_f\circ \delta = f$.\\

The most elementary example is when $\cC_1$ is the category of sets, and $\cC_2$ is the category of vector spaces. A free vector space generated by a set $M$ is just a vector space that contains $M$ as a basis. The free universal property expresses the idea that linear maps on a vector space are determined by arbitrary maps defined on the basis.\\

In this paper we are interested in two free constructions in the realm of functional analysis. One of them is when considering $\cC_1$ to be the category of pointed metric spaces and Lipschitz functions and $\cC_2$ the category of Banach spaces and bounded linear operators. The so-called Lipschitz-free Banach spaces $\mathcal{F}(M)$ have been a popular subject of research after the seminal paper of Godefroy and Kalton \cite{GK}, cf.~\cite{godesurv}. The second example is when $\cC_1$ is the category of Banach spaces and $\cC_2$ is the category of Banach lattices, a construction introduced in \cite{ART18} generalizing \cite{dPW15}. In all these categories, morphisms $f$ have a natural norm $\|f\|$ that is required to be preserved in the definitions of free objects.\\

When free objects exist, they are easily seen to be essentially unique. After choosing a particular $F[M]$ for every $M$, one has a functor that associates to every morphism $f\colon N\To M$ in $\cC_1$ the morphism $F[f]\colon F[N]\To F[M]$ in $\cC_2$ that is characterized by $F[f]\circ \delta_N = \delta_M\circ f$. A natural question is what kind of properties of $f$ are transferred to $F[f]$. For example, if $f$ is an inclusion (i.e.~an isomorphism onto its range), is $F[f]$ also an inclusion? Informally speaking, does $N\subset M$ imply $F[N]\subset F[M]$? In the case of Lipschitz-free spaces, the answer is affirmative when $\mathbb R$ is the scalar field because $\mathcal{F}(M)^*=\Lip(M)$ and because every Lipschitz-function $f\colon N\longrightarrow\mathbb R$ extends to a Lipschitz function $\tilde{f}\colon M\longrightarrow \mathbb R$ with the same Lipschitz norm \cite[Theorem 1.33]{weaver}. In the case of free Banach lattices, this happens to be a delicate question recently addressed by Oikhberg, Taylor, Tradacete and Troitsky \cite{OTTT}. 

A stronger property than being an inclusion is having a left inverse. This property is obviously preserved: If $g\circ f = id_M$, then $F[g] \circ F[f] = id_{F[M]}$. In the Banach space setting, having a left inverse means that the range is a complemented subspace. In this paper we investigate the preservation of local complementation under free constructions. This is an important weakening of complementation, arising from the \emph{local theory} of Banach spaces, where we are concerned with what happens at the level of finite-dimensional subspaces controlled by a uniform constant. 
 
\begin{defn}\label{defloccomp} Let $Y\subset X$ be Banach spaces and $\lambda\geq 1$. We say that
	$Y$ is locally $\lambda$-complemented in $X$ if for every finite-dimensional subspace $V$ of $X$ and every $\varepsilon>0$ there exists an operator $T\colon V\longrightarrow Y$ so that $T(v)=v$ for every $v\in V\cap Y$ and $\Vert T\Vert\leq \lambda+\varepsilon$.
\end{defn}

Perhaps the most remarkable fact related to this notion is the principle of local reflexivity that asserts that every Banach space is locally 1-complemented in its bidual (due to Lindenstrauss and Rosenthal \cite{linros}, see e.g. \cite[Theorem 9.15]{checos}). A more general fact is that local $\lambda$-complementation can be characterized by the existence of an operator $S:X\To Y^{**}$ of norm at most $\lambda$ that extends the canonical inclusion $Y\To Y^{**}$, also by the existence of a right inverse for the dual operator $X^*\To Y^*$, and also by the existence of $\lambda$-bounded extensions of operators into dual Banach spaces: every $T\colon Y\To Z^{*}$ extends to $\tilde{T}\colon X\To Z^*$ with $\|\tilde{T}\|\leq \lambda\|T\|$. We refer to the works of Fakhoury~\cite{Fakhoury} and Kalton~\cite{Kalton84} as fundamental early references on the theory. An open problem posed by Kalton \cite{Kalton08} is whether local complementation is equivalent to the existence of a Lipschitz retraction $r\colon X\To Y$. We consider a natural notion of Lipschitz local complementation (and some variants) and we show that it is equivalent to usual local complementation, cf. Theorem~\ref{LemmaLocalCompFreeBanach} and Theorem~\ref{Liplocomp}.

Another main result (Corollary~\ref{theo:LocCompFree}) is that local complementation is preserved by suitable free functors: If $N$ is a locally complemented subspace of a Banach space $M$, then $F[N]$ is locally complemented in $F[M]$. This gives in particular a sufficient condition for the preservation of isomorphic embeddings. A more precise characterization of when $F[N]$ is locally complemented in $F[M]$ is given in Theorem~\ref{TheoLocCompFreeObjects}. These results apply to the free Banach lattice generated by a Banach space as well as to the Lipschitz-free space of a Banach space. In the latter case, we are recovering  a result of Fakhoury \cite{Fakhoury} that for Banach spaces $N\subset M$, the adjoint $M^*\To N^*$ admits a right inverse operator if and only if the restriction between the spaces of Lipschitz maps $\Lip(M)\To \Lip(N)$ admits a right inverse operator. Remember that $\Lip(X) = \mathcal{F}(X)^*$.

We will treat the different free constructions from a unified perspective, by considering two categories $\cC_1$ and $\cC_2$ with some hypotheses. The category $\cC_2$ can be the Banach spaces or the Banach lattices but also the $p$-convex Banach lattices for which a free functor has been studied in \cite{jjttt}. For the category $\cC_1$ only Banach spaces and metric spaces with Lipschitz functions correspond to cases existing in the literature, but it could be any intermediate category like metric spaces with a scalar multiplication, or things of the like. 
Despite our efforts to make $\cC_1$ general enough to include the class of distributive lattices, in order to apply our main results to free Banach lattices generated by a lattice (see \cite{ARA18} for a definition), it turned out that this class requires a different treatment since several difficulties arise; we refer the reader to \cite{AMCRARZ21} for a detailed study of this case.

%The free Banach lattice generated by a lattice studied by the authors \cite{ARA18} does not fit so well into our scheme \textbf{\underline{really??}, a lo mejor se puede encajar en el Teorema \ref{TheoLocCompFreeObjects} con alguna definición algo más general}.

Another important result on local complementation is the Theorem of Sims and Yost \cite[Lemma III.4.4]{hww}: given any Banach space $X$ and any subspace $Y$ of $X$ there exists a locally complemented subspace $Z$ of $X$ such that $Y\subseteq Z$ and $\dens(Z)=\dens(Y)$, where $\dens$ denotes the density character. An analogous fact in the Lipschitz category has recently been proved in \cite{HQ}. A consequence of our results is that, under the hypotheses of Theorem \ref{TheoLocCompFreeObjects}, objects of the form $F[X]$ with $X$ a nonseparable Banach space have plenty of locally complemented subspaces of the form $F[Y]$ with $Y\subset X$ separable.

\section{Categorical framework}

We first discuss how to set a unified framework to treat Lipschitz-free spaces and free Banach lattices at once. A purely categorical does not fit our needs because we must impose some technical requirements that are too clumsy to be expressed in a purely categorical language. Instead, we incorporate a little bit of universal algebra or model theory inspired by the idea of continuous logic, cf.~\cite{continuouslogic}. Marcel de Jeu \cite{Marcel2020} also gave a general context for treating free objects in vector lattices, Banach lattices and related categories. We will not enter into deep waters with this, we will just introduce in a self-contained way a notion that is good for us, that we have called a metric-algebraic category and is rather simple. 

For a category $\cC$, we denote by $\Obj(\cC)$ the class of objects in $\cC$. A metric-algebraic category $\cC$ is one whose objects and morphisms can be described in the following way.

There must be an index set $I$ and natural numbers $\{n_i : i\in I\}$ such that every object of the category $\cC$ is of the form $(X,d,0,\{\sigma_i\}_{i\in I})$, where $d$ is a metric on the set $X$, $0$ is a fixed element of $X$ which we call the zero point of $X$ and each $\sigma_i$ is an $n_i$-ary operation on $X$, i.e.~a function of the form $\sigma_i\colon X^{n_i} \longrightarrow X$. The morphisms between $(X,d,0,\{\sigma_i\}_{i\in I})$ and $(Y,d',0',\{\tau_i\}_{i\in I})$ are the functions $T\colon X\To Y$ that are Lipschitz and preserve the zero point and the operations. That is,
$$\|T\| := \sup\left\{\frac{d'(Tx,Ty)}{d(x,y)} : x,y\in X, x\neq y\right\} <+\infty, \quad T(0)=0' \quad \text{ and }$$
$$T(\sigma_i(x_1,\ldots,x_{n_i})) = \tau_i(Tx_1,\ldots,Tx_{n_i}) \quad \text{ for all} \ (x_1,\ldots,x_{n_i})\in X^{n_i}.$$

Notice that, by the first property, a morphism $T$ in such a  category always has an associated norm $\|T\|$. One example is the category $\cM$ of metric spaces with a distinguished point $0$ and Lipschitz functions preserving this point. Notice that in this case the objects are of the form $(M,d,0)$ and no $n$-ary operation is considered.

Banach spaces constitute also a metric-algebraic category $\cB$, whose objects are described as
$$(X,d,0,\{l_{\alpha,\beta}\}_{\alpha,\beta\in\mathbb{R}}),$$
where $d$ is the distance induced by the norm of the Banach space $X$ and $l_{\alpha,\beta}:X \times X \longrightarrow X$ represents the $2$-ary operation which sends each pair $(x,y)$ to $\alpha x+\beta y$. By adding the lattice operations $\vee$ and $\wedge$, Banach lattices also constitute a metric-algebraic category $\cL$, whose morphisms are the bounded operators that commute with $\vee$ and $\wedge$. We will often abuse notation and simply denote an object $(X,d,0,\{\sigma_i\}_{i\in I})$ in a metric-algebraic category by $X$ for short.

\bigskip
If $\cC_1$ and $\cC_2$ are two metric-algebraic categories with index sets $I_1$ and $I_2$ respectively, then the inclusion $\cC_2 \subseteq \cC_1$ will mean that the list of operations that defines $\cC_2$ contains the list that defines $\cC_1$, and that every object of $\cC_2$ becomes an object of $\cC_1$ when removing the extra operations, i.e.~ $I_1 \subseteq I_2$ and if $(X,d,0,\{\sigma_i\}_{i\in I_2})$ is an object in $\cC_2$
then $(X,d,0,\{\sigma_i\}_{i\in I_1})$ belongs to $\cC_1$.

For example,
$$\cL \subseteq \cB \subseteq \cM .$$

Some further examples are the category $\texttt{Wedges}$ of wedges in Banach spaces (subsets closed under addition and nonnegative scalar multiplication) with the additive and positively homogeneous Lipschitz maps as morphisms, the category $\texttt{Convex}_0$ of convex subsets of Banach spaces that contain $0$ with Lipschitz maps that preserve convex combinations and $0$ as morphisms, and the category $\mathbb{Q}\texttt{-Convex}_0$ similar to the previous one but considering only  rational convex combinations. Notice that in the case of $\texttt{Wedges}$ we consider the $2$-ary operations $l_{\alpha,\beta}$ defined as before but only for $\alpha,\beta \geq 0$, whereas in $\texttt{Convex}_0$ and $\mathbb{Q}\texttt{-Convex}_0$ we consider the same operations $l_{\alpha,\beta}$ but with the restrictions that $0\leq \alpha \leq 1$ and $\beta=1-\alpha$ (and $\alpha \in \Q$ in the second case). Thus,
$$\cB \subseteq \texttt{Wedges} \subseteq \texttt{Convex}_0 \subset \mathbb{Q}\texttt{-Convex}_0\subseteq \cM .$$

%We follow the definition of a free object given in \cite[Definition 2.1]{Marcel2020}:

In our context, the notion of free object can be formulated as follows: 

\begin{defn}
	Let $\cC_1, \cC_2$ be two metric-algebraic categories with $\cC_2 \subseteq \cC_1$. Let $X$ be an object in $\cC_1$. A \textit{free object} generated by $X$ in $\cC_2$ is a pair $(\delta_X, F[X])$, where $F[X] \in \Obj(\cC_2)$ and $\delta_X \colon X \longrightarrow F[X]$ is a morphism in $\cC_1$ with $\|\delta_X\|=1$ and the property that for every $Z \in \cC_2$ and every morphism $f\colon X\longrightarrow Z$ in $\cC_1$ there exists a unique morphism $T_f \colon F[X] \longrightarrow Z$ in $\cC_2$ such that $\|T_f\| = \|f\|$ and $T_f\circ \delta_X = f$. 
	$$\xymatrix{X \ar_{\delta_X}[d]\ar[rr]^f&& Z\\
		F[X]\ar_{T_f}[urr]&& }$$
\end{defn}

Notice that we have added the conditions $\|\delta_X\|=1$ and $\|T_f\| = \|f\|$ to the `purely categorical' definition of a free object; see, for instance, \cite[Definition 2.1]{Marcel2020}.
When one deals with the purely categorical definition then the free object generated by a given object in $\cC_1$ is unique up to isomorphisms in $\cC_2$ (see, e.g., the comment below \cite[Definition 2.1]{Marcel2020}). Furthermore, as the referee pointed out to us, the main result Theorem \ref{TheoLocCompFreeObjects} still holds true under the purely categorical definition. Nevertheless, in order to avoid creating incoherence in the literature and, in particular, with the classical definitions of the free Banach lattice over a Banach space and the Lipschitz-free space generated by a metric space, we opted for keeping the reference to the norm which, in addition, yields a unique object up to isometries in $\cC_2$:

\begin{lem}	Let $\cC_1, \cC_2$ be two metric-algebraic categories with $\cC_2 \subseteq \cC_1$ and let $X$ be an object in $\cC_1$. If we have two free objects $(\delta_X,F[X])$ and $(\delta'_X,F'[X])$ in $\cC_2$ generated by $X$ then there exists an isometric isomorphism in $\cC_2$ between $F[X]$ and $F'[X]$.
\end{lem}
\begin{proof}
The definition of free object applied to the functions $\delta_X$ and $\delta'_X$ yields two morphisms  $T_{\delta'_{X}}\colon F[X] \longrightarrow F'[X]$ and $T_{\delta_{X}}\colon F'[X] \longrightarrow F[X]$ in $\cC_2$ such that $\|T_{\delta'_{X}}\|=\|T_{\delta_{X}}\|=\|\delta_{X}\|=\|\delta'_{X}\|=1$ and $T_{\delta_{X}} \circ \delta'_X = \delta_X$ and $T_{\delta'_{X}} \circ \delta_X = \delta'_X$.
Thus, $T_{\delta_{X}} \circ T_{\delta'_{X}} \circ \delta_X = \delta_X.$
Furthermore,
$$ 1=\| \delta_X\| = \| T_{\delta_{X}} \circ T_{\delta'_{X}} \circ \delta_X\| \leq  \| T_{\delta_{X}} \circ T_{\delta'_{X}} \|\| \delta_X\|=\| T_{\delta_{X}} \circ T_{\delta'_{X}} \| \leq \|T_{\delta_{X}} \|\| T_{\delta'_{X}} \|=1,
$$
which implies that $\|T_{\delta_{X}} \circ T_{\delta'_{X}} \|=1$. Since the identity map $I\colon F'[X] \longrightarrow F'[X]$ is a $\cC_2$-morphism which also satisfies $\|I\|=\|\delta_X\|=1$ and $I\circ \delta_X =\delta_X$, the uniqueness of such morphism yields that $T_{\delta_{X}} \circ T_{\delta'_{X}}$ is the identity on $F'[X]$. The same argument shows that $T_{\delta'_{X}} \circ T_{\delta_{X}}$ is the identity on $F[X]$.
Since $\|T_{\delta_{X}}\|=\|T_{\delta'_{X}}\|=1$, we conclude that $F[X]$ and $F'[X]$ are isometrically isomorphic in $\cC_2$ as desired.
\end{proof}

%%If we have two free objects $(\delta_X,F[X])$ and $(\delta'_X,F'[X])$ generated by $X$ then the universal property of the definition implies the existence of two inverse isomorphisms between $F[X]$ and $F'[X]$ that commute with the $\delta$'s. This means that the free object is unique up to isomorphisms in $\cC_2$ (see, e.g., the comment below \cite[Definition 2.1]{Marcel2020}). 

A minor technical issue with this definition is that does not apply to the case $X=0$, because in that case one cannot have $\|\delta_X\|=1$.

If we choose a representative $(\delta_X,F[X])$ for each $X$ that has a free object, then the association $X\mapsto F[X]$ defines a functor: if $f\colon Y\longrightarrow X$ is a morphism in $\cC_1$, then $f$ induces a morphism $F[f]:=T_{\delta_X \circ f}:  	F[Y] \longrightarrow F[X]$ in $\cC_2$, characterized by the fact that $F[f]\circ \delta_Y = \delta_X\circ f$ and $\|F[f]\|=\|\delta_X \circ f \|$.

\section{Local complementation relative to a category $\cC_1$}

We now look at local complementation in a more general framework, when we relativize to some metric-algebraic category $\cC_1\supseteq \cB$. When $\cC_1= \cB$, what Theorem~\ref{LemmaLocalCompFreeBanach} states are the different characterizations of local complementation in Banach spaces mentioned in the introduction. The other case that we have in mind is when $\cC_1 = \cM$, so that morphisms become Lipschitz maps instead of operators. But Theorem~\ref{LemmaLocalCompFreeBanach} also applies to categories like $\texttt{Wedges}$, $\texttt{Conv}_0$ or $\mathbb{Q}$-$\texttt{Conv}_0$ mentioned earlier.
Although each class $\cC$ yields a natural notion of being
$\cC$-locally complemented, Theorem \ref{Liplocomp} shows that all these notions are equivalent to being (Banach) locally complemented.

We need an extra hypothesis. A metric-algebraic category $\cC_1$ is called \textit{nice} if whenever $f\colon A\To B$ is a morphism in $\cC_1$ and $D\subset B$ is an object of $\cC_1$ with the operations, the zero point and the metric inherited from $B$, then $f^{-1}(D)$ is also an object of $\cC_1$ when endowed with the operations, the zero point and the metric inherited from $A$, so in particular $f|_{f^{-1}(D)}\colon f^{-1}(D) \longrightarrow D$ is a morphism in $\cC_1$.

As a remark concerning notation, the canonical inclusion of a Banach space into its bidual is always denoted by the letter $j$.

\begin{thm}
	\label{LemmaLocalCompFreeBanach}
	Let $\cC_1$ be a nice metric-algebraic category with $\cB\subseteq \cC_1$. Let $X$ be a Banach space, $Y\subseteq X$ be a subspace and $\lambda \geq 1$. The following assertions are equivalent.
	
	\begin{enumerate}
		\item For every dual Banach space $Z^*$ and every morphism $f\colon Y\longrightarrow Z^*$ in $\cC_1$ there exists a morphism $h\colon X\longrightarrow Z^*$ in $\cC_1$ extending $f$ with $\|h\|\leq \lambda \|f\|$.
		\item There is a morphism $h\colon X\longrightarrow Y^{**}$ in $\cC_1$ with $\|h\|\leq \lambda $ extending the canonical inclusion $j\colon Y\longrightarrow Y^{**}$.
	
		\item For every $\varepsilon>0$ and every finite set $G\subseteq X$, there exists a set $V$ with $G\subset V\subset X$, so that $V$ is an object of $\cC_1$ with the operations, the metric and the zero point inherited from $X$  and there exists a morphism $\varphi:V\longrightarrow Y$ in $\cC_1$ with $\|\varphi\|\leq \lambda (1+\varepsilon)$ so that
		$\varphi(e)=e$ holds for every $e\in V\cap Y$.
		
		\item For every $\varepsilon>0$ and every finite set $G\subseteq X$, there exists a set $V$ with $G\subset V\subset X$, so that $V$ is an object of $\cC_1$ with the operations, the metric and the zero point inherited from $X$  and there exists a morphism $\varphi:V\longrightarrow Y$ in $\cC_1$ with $\|\varphi\|\leq \lambda (1+\varepsilon)$ so that
		$\|\varphi(e)-e\|<\varepsilon$ holds for every $e\in V\cap Y$.
		
	\end{enumerate}
\end{thm}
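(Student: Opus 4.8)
The plan is to prove the cyclic chain $(1)\Rightarrow(2)\Rightarrow(3)\Rightarrow(4)\Rightarrow(1)$, so that only $(2)\Rightarrow(3)$ and $(4)\Rightarrow(1)$ carry real content. The implication $(1)\Rightarrow(2)$ is a mere specialization: the canonical inclusion $j\colon Y\To Y^{**}$ is a linear isometry, hence a morphism in $\cB$ and therefore in $\cC_1$ (a morphism preserving the richer list of $\cB$-operations certainly preserves the $\cC_1$-ones), and $Y^{**}=(Y^*)^*$ is a dual space. Applying (1) to $f=j$ yields $h\colon X\To Y^{**}$ in $\cC_1$ with $\|h\|\leq\lambda\|j\|=\lambda$ extending $j$, which is precisely (2). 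The implication $(3)\Rightarrow(4)$ is trivial, since $\varphi(e)=e$ forces $\|\varphi(e)-e\|=0<\varepsilon$.

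For $(2)\Rightarrow(3)$, fix a finite $G\subseteq X$ and $\varepsilon>0$ and set $E=\spann(h(G))$, a finite-dimensional (hence closed, linear) subspace of $Y^{**}$. Being a Banach space stable under the linear operations of $\cC_1$, it is an object of $\cC_1$ with the inherited operations. This is the point where the \emph{niceness} hypothesis is essential: $V:=h^{-1}(E)$ is then an object of $\cC_1$ with $G\subseteq V\subseteq X$, and $h|_V\colon V\To E$ is a morphism of norm $\leq\lambda$. The principle of local reflexivity \cite{linros} supplies a linear operator $u\colon E\To Y$ with $\|u\|\leq 1+\varepsilon$ and $u(e)=e$ for all $e\in E\cap Y$; as a bounded linear map it is a morphism in $\cC_1$. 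Then $\varphi:=u\circ h|_V\colon V\To Y$ is a $\cC_1$-morphism with $\|\varphi\|\leq\lambda(1+\varepsilon)$, and for $e\in V\cap Y$ one has $h(e)=j(e)=e$ because $h$ extends $j$, so $e\in E\cap Y$ and $\varphi(e)=u(e)=e$, giving (3).

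The substantial work is $(4)\Rightarrow(1)$, a weak$^*$ gluing argument, and this is where I expect the one genuinely delicate point to sit. Fix a dual space $Z^*$ and a morphism $f\colon Y\To Z^*$. Direct the set $\mathcal{D}$ of pairs $d=(G,\varepsilon)$, with $G\subseteq X$ finite and $\varepsilon>0$, by declaring $(G,\varepsilon)\leq(G',\varepsilon')$ when $G\subseteq G'$ and $\varepsilon\geq\varepsilon'$, and fix an ultrafilter $\U$ on $\mathcal{D}$ containing all tails $\{d':d'\geq d\}$. For each $d$, (4) provides $V_d$ and $\varphi_d\colon V_d\To Y$, and $f\circ\varphi_d\colon V_d\To Z^*$ is a $\cC_1$-morphism of norm $\leq\lambda(1+\varepsilon)\|f\|$. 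For each fixed $x$ the values $f(\varphi_d(x))$ are defined for all large $d$ (once $x\in G$) and stay in a bounded subset of $Z^*$ (using that morphisms are anchored at the distinguished point $0$), hence in a weak$^*$-compact ball by Banach--Alaoglu; I define $h(x)$ to be the weak$^*$ limit $\lim_{\U} f(\varphi_d(x))$. The bound $\|h\|\leq\lambda\|f\|$ follows from weak$^*$-lower semicontinuity of the norm together with $\lim_\U\varepsilon=0$, and $h$ extends $f$ because for $y\in Y$ the approximate fixing $\|\varphi_d(y)-y\|<\varepsilon$ forces $\varphi_d(y)\to y$ in norm, whence $f(\varphi_d(y))\to f(y)$.

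The crux, and the step I expect to be the main obstacle, is verifying that $h$ is a morphism in $\cC_1$, i.e.\ that it preserves the operations. For an $n_i$-ary operation $\sigma_i$ of $\cC_1$ and $x_1,\dots,x_{n_i}\in X$, for $d$ large one has (since $V_d$ is closed under operations and $\varphi_d$ preserves them) the identity $f\bigl(\varphi_d(\sigma_i(x_1,\dots,x_{n_i}))\bigr)=\sigma_i\bigl(f(\varphi_d(x_1)),\dots,f(\varphi_d(x_{n_i}))\bigr)$, and passing to the limit along $\U$ requires $\sigma_i$ to be weak$^*$-continuous on $Z^*$. This is exactly where the hypothesis $\cB\subseteq\cC_1$ is used: every operation of $\cC_1$ is among the linear operations of $\cB$, and these are weak$^*$-continuous on the dual space $Z^*$, so the limit passes through $\sigma_i$ and $h(\sigma_i(x_1,\dots,x_{n_i}))=\sigma_i(h(x_1),\dots,h(x_{n_i}))$, completing the cycle.
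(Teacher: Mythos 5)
Your proposal is correct and follows essentially the same route as the paper: the same cycle $(1)\Rightarrow(2)\Rightarrow(3)\Rightarrow(4)\Rightarrow(1)$, with local reflexivity plus niceness for $(2)\Rightarrow(3)$ and a weak$^*$ compactness/gluing argument for $(4)\Rightarrow(1)$, including the key observation that the $\cC_1$-operations are linear combinations and hence weak$^*$-continuous. The only cosmetic difference is that you take an ultrafilter limit where the paper takes a cluster point of the net.
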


\begin{proof}
	
	(1)$\Rightarrow$(2). It is immediate taking $Z^*=Y^{**}$ and $f=j$.
	
	(2)$\Rightarrow$(3).  By the Principle of Local Reflexivity \cite[Theorem 9.15]{checos} there exists an operator $P\colon \text{span}(h(G))\longrightarrow Y$ so that
	\begin{itemize}
		\item $P(e)=e$ holds for every $e\in \text{span}(h(G))\cap Y$ and,
		\item $\Vert P\Vert\leq 1+\varepsilon$.
	\end{itemize}
	Since $\cC_1$ is nice, $V := h^{-1}(\text{span}(h(G)))$ is an object in $\cC_1$ with the operations, the zero point and the metric inherited from $X$ (as an object of $\cC_1$). Define $\varphi \colon V\longrightarrow Y$ by the formula $\varphi(v) = P(h(v))$. This is a morphisms in $\cC_1$ since $h$ is so and $P$ is an operator (remember that $\cB\subseteq \cC_1$). Given $e\in V\cap Y$, we have $h(e)=j(e)$, and this implies that  
	$$\varphi(e)=P(h(e))=P(j(e))=e.$$
	Moreover, by definition of $\varphi$ we have $$\|\varphi\| \leq \|P\|\cdot \|h\|\leq  (1+\varepsilon) \lambda.$$
	
	(3)$\Rightarrow$(4) is obvious. (4)$\Rightarrow$(1). Let
	$$\mathcal G:=\{(G,\varepsilon): G\subseteq X\mbox{ is finite and }0<\varepsilon<1\},$$
	which is a directed set under the order
	$$(G,\varepsilon)\leq (H,\delta)\Longleftrightarrow G\subseteq H\mbox{ and }\delta\leq \varepsilon.$$
	For every $(G,\varepsilon)\in\mathcal{G}$, our hypothesis provides a set $V_{(G,\varepsilon)}$ such that  $G\subseteq V_{(G,\varepsilon)} \subseteq X$ and  $V_{(G,\varepsilon)}$ is an object of $\cC_1$ when endowed with the operations, the metric and the zero point inherited from $X$, and a morphism $\varphi_{(G,\varepsilon)}\colon V_{(G,\varepsilon)} \longrightarrow Y$ in $\cC_1$  so that $\|\varphi_{(G,\varepsilon)}(e)-e\|<\varepsilon$ holds for every $e\in V_{(G,\varepsilon)}\cap Y$ and $\|\varphi_{(G,\varepsilon)}\|\leq \lambda (1+\varepsilon)$. 
	
	Now, given any dual Banach space $Z^*$ and any morphism $f\colon Y\longrightarrow Z^{*}$ in $\cC_1$, define for every $(G,\varepsilon)\in\mathcal G$ the $\cC_1$-morphism $f_{(G,\varepsilon)}:=f\circ \varphi_{(G,\varepsilon)}\colon  V_{(G,\varepsilon)}\longrightarrow Z^{*}$. Define $h_{(G,\varepsilon)}\colon X\longrightarrow Z^{*}$ by the equation
	$$h_{(G,\varepsilon)}(x):=\left\{\begin{array}{cc}
		f_{(G,\varepsilon)}(x)& \mbox{ if }x\in V_{(G,\varepsilon)},\\
		0 & \mbox{ if }x\notin V_{(G,\varepsilon)}.
	\end{array} \right.$$
	Note that $\|h_{(G,\varepsilon)}(x)\| \leq \|f_{(G,\varepsilon)}\|\|x\| \leq \|f\|\|\varphi_{(G,\varepsilon)}\| \|x\|\leq \lambda (1+\varepsilon)\|f\|\|x\|\leq 2\lambda \|f\|\|x\|$ for every $x \in X$. Therefore,
	$(h_{(G,\varepsilon)})_{(G,\varepsilon)\in\mathcal G}$ defines a net in the compact space
	$$\prod\limits_{x\in X} (2\lambda\|f\| \|x\| B_{Z^{*}}, w^*).$$

	We take a cluster point $h\colon X\rightarrow Z^*$ of this net. 
	
	\bigskip
	\textbf{Claim 1.} $h$ extends $f$.
	
	Pick $y\in Y$ and $\varepsilon>0$. Take $G=\{y\}$. For all $(H,\delta)\geq (G,\varepsilon)$ we have $$\|h_{(H,\delta)}(y) -f(y)\| = \|f_{(H,\delta)}(y) - f(y)\| = \|f(\varphi_{(H,\delta)}(y))- f(y))\| \leq \|f\| \|\varphi_{(H,\delta)}(y)- y\| \leq \|f\|\varepsilon.$$ We conclude that $h(y) = f(y)$.

	\bigskip
	\textbf{Claim 2.} $h$ is a Lipschitz function with $\|h\| \leq \lambda \|f\| $.
	
	Fix $x,y\in X$ and $\varepsilon>0$. If we consider $G = \{x,y\}$, then for every $(H,\delta) \geq (G,\varepsilon)$ we have
	$$ \|h_{(H,\delta)}(x) - h_{(H,\delta)}(y)\| = \|f_{(H,\delta)}(x) - f_{(H,\delta)}(y)\| \leq \|f\|\|\varphi_{(H,\delta)}\|\|x-y\| \leq \lambda (1+\varepsilon)\|f\|\|x-y\|.$$
	We must be careful here taking limits because the norm is not continuous in the weak$^*$ topology. But if we take any $z\in B_Z$ we have 
	$$ |\left(h_{(H,\delta)}(x) - h_{(H,\delta)}(y)\right)(z)| \leq \lambda (1+\varepsilon)\|f\|\|x-y\|,$$
	for all $(H,\delta)\geq (G,\varepsilon)$. And from this we get that
	$$ |\left(h(x) - h(y)\right)(z)| \leq \lambda (1+\varepsilon)\|f\|\|x-y\|$$
	for all $z\in B_Z$ and all $\varepsilon$, so we get $\|h(x)-h(y)\|\leq \lambda\|f\|\|x-y\|$ as desired.

	\bigskip
	
	\textbf{Claim 3.} $h$ is a morphism in $\cC_1$.
	
	We already showed that $h$ is Lipschitz, so it only remains to check that $h$ preserves the operations that define category $\cC_1$. Remember that we are assuming $\cB\subseteq \cC_1$, so we have to check that $h$ preserves some kind of linear combination. Namely, given $\alpha,\beta \in \R$, we must show that
	$$ h(\alpha x + \beta y)=\alpha h(x)+\beta h(y) $$
	for every $x,y\in X$.
	We fix $x,y\in X$, $G:=\{x,y, \alpha x+\beta y\}$ and some $\varepsilon>0$. For all $(H,\delta)\geq (G,\varepsilon)$ we have
	\[\begin{split}
	h_{(H,\delta)}(\alpha x +\beta y) & = f_{(H,\delta)}(\alpha x +\beta y) \\
	&  = \alpha f_{(H,\delta)}(x) + \beta f_{(H,\delta)}(y)\\
	&  = \alpha h_{(H,\delta)}(x)+\beta  h_{(H,\delta)}(y).\end{split}\]
	Since linear combinations are weak$^*$-continuous, we get the desired result.
\end{proof}

When $\cC_1 = \mathtt{Banach}$, condition (3) above is equivalent to the fact that the Banach space $Y$ is $\lambda$-complemented in $X$ (Definition \ref{defloccomp}). Thus, it is natural to introduce the following definition:
When $Y\subseteq X$ satisfy the assumptions of Theorem~\ref{LemmaLocalCompFreeBanach}, we will say that $Y$ is $\cC_1$-locally $\lambda$-complemented in $X$, and we will say that $Y$ is $\cC_1$-locally complemented in $X$ when there is $\lambda\geq 1$ so that $Y$ is $\cC_1$-locally $\lambda$-complemented in $X$.

 By looking at condition (2), we observe that if $\cC_1 \subseteq \cC'_1$, then there are fewer morphisms in $\cC_1$ and therefore $\cC_1$-locally $\lambda$-complementation implies  $\cC'_1$-locally $\lambda$-complementation. The strongest possible notion is $\cB$-local complementation (just called local complementation of Banach spaces) while the weakest notion is $\cM$-local complementation, when the functions involved are just Lipschitz. But there is actually no point in making all these distinctions because of the following result.

\begin{thm}\label{Liplocomp}
	For $Y\subseteq X$ Banach spaces and $\lambda\geq 1$, $Y$ is locally $\lambda$-complemented in $X$ if and only if $Y$ is Lipschitz locally $\lambda$-complemented in $X$. Therefore, all notions of $\cC_1$-local complementation are equivalent, for all nice metric-algebraic categories $\cC_1\supseteq \cB$.
\end{thm}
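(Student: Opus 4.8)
The plan is to isolate the one nontrivial implication. By the monotonicity observed just above (a bounded operator is in particular a Lipschitz map, so any $\cB$-morphism witnessing condition~(2) is automatically an $\cM$-morphism), the implication ``$\cB$-locally $\lambda$-complemented $\Rightarrow$ $\cM$-locally $\lambda$-complemented'' is free, and the content is the converse. Applying Theorem~\ref{LemmaLocalCompFreeBanach} with $\cC_1=\cM$, I start from a Lipschitz map $h\colon X\To Y^{**}$ with $\|h\|\le\lambda$ and $h|_Y=j$, and the goal is to produce from it a genuinely \emph{linear} operator $S\colon X\To Y^{**}$ with $\|S\|\le\lambda$ and $S|_Y=j$; this is condition~(2) for $\cC_1=\cB$, i.e.\ ordinary local $\lambda$-complementation. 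Thus everything reduces to a linearization of $h$ that preserves the values on the subspace $Y$.

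The difficulty is that the two obvious linearizations pull in opposite directions. Averaging difference quotients against a translation-invariant mean on all of $X$ produces an additive, norm-$\le\lambda$ map, but it forgets $h|_Y=j$ entirely, since $Y$ is a single (null) coset for any $X$-invariant mean. Taking a radial (recession) limit preserves the $Y$-values, because $Y$ is scaling-invariant and $h|_Y$ is linear, but it need not be additive. My proposal is to average over the amenable affine subgroup $G=Y\rtimes\mathbb{R}_{>0}$ acting on $X$ by $(y,s)\cdot x=sx+y$: fixing an invariant mean $\mathbb{M}$ on $G$, define $S(x)\in Y^{**}$ by $\langle S(x),z\rangle=\mathbb{M}_{(y,s)}\bigl[\tfrac{1}{s}\langle h(sx+y)-h(y),\,z\rangle\bigr]$ for $z\in Y^*$. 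The integrand is bounded by $\lambda\|x\|\,\|z\|$, so $S(x)$ is well defined with $\|S(x)\|\le\lambda\|x\|$ by convexity of the mean; it is linear in $z$, hence genuinely $Y^{**}$-valued. Crucially, the extension property survives: if $y_0\in Y$ then $sy_0+y\in Y$ and $h|_Y=j$, so the integrand collapses to the constant $\langle j(y_0),z\rangle$ and $S(y_0)=j(y_0)$. Positive homogeneity $S(\sigma x)=\sigma S(x)$ likewise follows from invariance under right translation by $(0,\sigma)$.

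The main obstacle is additivity in the \emph{transverse} directions. Writing $F_x(y,s)=\tfrac1s\langle h(sx+y)-h(y),z\rangle$, one has the cocycle identity $F_{x+x'}(y,s)=F_x(sx'+y,s)+F_{x'}(y,s)$, which would telescope to $S(x+x')=S(x)+S(x')$ if $G$ absorbed the shift by $sx'$. This works when $x'\in Y$, but for $x'\notin Y$ the shift is transverse and $G$ only translates inside $Y$; enlarging $G$ to absorb all of $X$ would resurrect the full affine group, which is non-amenable, so no single invariant mean can deliver both properties at once. I expect this to be the heart of the proof. The way I would resolve it is not to force global additivity directly, but to descend to finite-dimensional sections and feed the output into the net/cluster-point machinery of the proof of (4)$\Rightarrow$(1) in Theorem~\ref{LemmaLocalCompFreeBanach}: for each finite-dimensional $E\subseteq X$ and each $\varepsilon>0$ I would use the homogeneous map $S$ together with the principle of local reflexivity to build a genuinely linear $\varphi_E\colon E\To Y$ that approximately fixes $E\cap Y$ and has $\|\varphi_E\|\le\lambda(1+\varepsilon)$, correcting the transverse additivity defect over the finite-dimensional $E$ where local reflexivity gives exact linear control. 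The cluster-point argument then assembles these into the global linear $S$, linearity passing to the weak$^*$-limit precisely because finite linear combinations are weak$^*$-continuous, as in Claim~3 there. The finite-dimensional transverse correction is the step I expect to require the most care.
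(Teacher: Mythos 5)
Your reduction is the right one and coincides with the paper's starting point: the only nontrivial direction is $\mathcal{M}\Rightarrow\mathcal{B}$, and via condition (2) of Theorem~\ref{LemmaLocalCompFreeBanach} everything comes down to replacing a $\lambda$-Lipschitz $h\colon X\To Y^{**}$ with $h|_Y=j$ by a \emph{linear} operator with the same two properties. Your averaging over $G=Y\rtimes\mathbb{R}_{>0}$ is also a sensible intermediate move: it yields a positively homogeneous $S$ with $\|S\|\le\lambda$, $S|_Y=j$, and even $S(x+y)=S(x)+j(y)$ for all $y\in Y$. But the proposal stops exactly where the difficulty begins. The ``finite-dimensional transverse correction via local reflexivity'' is not an argument: the principle of local reflexivity produces a linear map from a finite-dimensional subspace of $Y^{**}$ back into $Y$ fixing the points of $Y$; it does nothing to convert a positively homogeneous, non-additive map on a finite-dimensional $E\subseteq X$ into a linear one, and that conversion is the whole problem. (A side error: $X\rtimes\mathbb{R}_{>0}$ is solvable, hence amenable as a discrete group; the true obstruction to averaging over all of $X$ is not non-amenability but that such an average forgets $h|_Y=j$.) To close the gap along your lines you would need an explicit differentiation step --- e.g.\ weak$^*$ Gateaux differentiate $S|_E$ at a suitable point $x_0$ of $E$, observe that $DS(x_0)$ is linear of norm $\le\lambda$ and, because $S(x+y)=S(x)+j(y)$, that it restricts to $j$ on $E\cap Y$ --- and only then apply local reflexivity and the cluster-point machinery. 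None of this appears in your write-up, and it is the heart of the proof.

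The paper avoids the issue entirely by quoting Kalton's linearization operator \cite[Proposition 2.11]{Kalton08}: a norm-one operator $P\colon \Lip(X)\To X^*$ with $P(f)|_Y=f|_Y$ whenever $f|_Y$ is linear. Precomposing $h$ with functionals gives an operator $\phi\colon Y^{***}\To \Lip(X)$, $\phi(y^{***})=y^{***}\circ h$, of norm $\le\lambda$; then $\psi=P\circ\phi|_{Y^*}\colon Y^*\To X^*$ is an operator of norm $\le\lambda$ whose adjoint, restricted to $X$, is the required linear extension of $j$. All the averaging and differentiation you are trying to perform by hand is packaged inside Kalton's proposition. In short: right strategy and a correct diagnosis of the obstruction, but the key linearization step is genuinely missing.
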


\begin{proof}
We assume that $Y$ is $\cM$-locally $\lambda$-complemented, and we must prove that $Y$ is $\cB$-locally $\lambda$-complemented in $X$. Let $\Lip(X)$ be the Banach space of all Lipschitz functions $f\colon X\To\mathbb{R}$ that vanish at 0, supplied with the Lipschitz norm. By \cite[Proposition 2.11]{Kalton08}, there exists an operator $P\colon \Lip(X)\To X^*$ with $\|P\|=1$	and such that $P(f)|_Y = f|_Y$ holds for every $f\in \Lip(X)$ with $f|_Y$ linear. By Theorem~\ref{LemmaLocalCompFreeBanach}(2) we have a $\lambda$-Lipschitz function $h\colon X\To Y^{**}$ that extends the canonical inclusion $Y\subset Y^{**}$. Define $\phi\colon Y^{***}\To \Lip(X)$ as $\phi(y^{***})(x) = y^{***}(h(x))$. First of all, we check that $\phi$ is an operator with $\|\phi\|\leq \lambda$ whose range is in fact inside $\Lip(X)$. Linearity is trivial, and
$$\|\phi(y^{***})(x) - \phi(y^{***})(z)\| = \|y^{***}(h(x)-h(z))\| \leq \|y^{***}\|\lambda\|x-z\| $$
which shows that $\|\phi(y^{***})\|\leq \lambda \|y^{***}\|$. Now, we consider $\psi:Y^* \To X^*$ given by $\psi = P\circ \phi|_{Y^*}$, which is an operator with $\|\psi\|\leq \lambda$. We claim that $\psi^*|_{X}: X \To Y^{**}$ is the operator from Theorem~\ref{LemmaLocalCompFreeBanach}(2) that we are looking for. We just need to check that $\psi^*(y) = j(y)$ for all $y\in Y$. That is, $\psi^*(y)(y^\ast) = y^\ast(y)$ for all $y^\ast\in Y^*$. But
$$\psi^\ast(y)(y^\ast) = \psi(y^\ast)(y) = P(\phi(y^\ast))(y) = P(e_{y^\ast}\circ h)(y),$$
where $e_{y^\ast}:Y^{**}\To \mathbb{R}$ is the evaluation function on $y^{\ast}$.	Notice that $(e_{y^{*}}\circ h)(z) = y^{\ast}(z)$ for all $z\in Y$, because $h$ extends the canonical inclusion $Y\subset Y^{**}$. In particular, $(e_{y^{*}}\circ h)|_Y$ is linear, so by the properties of $P$, $P(e_{y^{*}}\circ h)|_Y = (e_{y^{*}}\circ h)|_Y $, so $P(e_{y^\ast}\circ h)(y) = y^{*}(y)$ and this concludes the proof.
\end{proof}	

\begin{rem}\label{rem:mejorakalton}
The previous theorem improves \cite[Proposition 3.21]{Kalton08}, where it is proved that, given two Banach spaces $Y\subset X$, the existence of a Lipschitz retraction $r \colon X\longrightarrow Y$ implies that $Y$ is locally complemented in $X$.
\end{rem}

\section{Local complementation under a free functor}

Let $\cC_2\subseteq \cB$. Given a Banach space $Z\in \Obj(\cC_2)$, we can consider its bidual space $Z^{**}$. If we can extend the operations in $Z$ to $Z^{**}$ in order to make it an object of $\cC_2$, then the canonical inclusion $j\colon Z\To Z^{**}$ is a $\cC_2$-morphism. We say that  $\cC_2$ is \emph{closed under biduals} if we can do this for all $Z\in \Obj(\cC_2)$ and in this case we consider each bidual space $Z^{**}$ endowed with an extension of the operations in $Z$ which might not be unique. Examples of this are the categories of Banach spaces, Banach lattices, or $p$-convex Banach lattices (see \cite[Proposition 1.d.4]{lita} for the final assertion).

\begin{thm}\label{TheoLocCompFreeObjects}
	Suppose that we have metric-algebraic categories $$\cC_2\subseteq \cB \subseteq \cC_1,$$ $\cC_1$ is nice, $\cC_2$ is closed under biduals. Suppose that $i\colon N \To M$ is a morphism of $\cC_1$, and that $N$ and $M$ have free objects $F[N]$ and $F[M]$ in $\cC_2$. The following are equivalent:
	\begin{enumerate}
		\item $F[i]\colon F[N] \longrightarrow F[M]$ is an embedding of the Banach space $F[N]$ as a locally complemented subspace of $F[M]$.
		%\item $F[i]:F[N] \longrightarrow F[M]$ is an isomorphic embedding with $\cC_1$-locally complemented range.
		\item For every dual Banach space $Z^*$ endowed with a $\cC_2$-structure and every morphism $f\colon N\longrightarrow Z^*$ in $\cC_1$ there exists a morphism
		$h\colon M\longrightarrow Z^{*}$ in $\cC_1$ with $h\circ i =f$.
		\item There exists a morphism $h\colon M \To F[N]^{**}$ in $\cC_1$ such that $j\circ\delta_N = h\circ i$, where $j\colon F[N]\To F[N]^{**}$ is the canonical inclusion.
		\item There is a $\cC_2$-morphism $T\colon F[M]\To F[N]^{**}$ such that $T\circ F[i] = j$, where $j\colon F[N]\To F[N]^{**}$ is again the canonical inclusion.
	\end{enumerate}

\end{thm}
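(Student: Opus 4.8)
The plan is to run the cycle $(1)\Rightarrow(3)\Rightarrow(4)\Rightarrow(1)$ and to fold in $(2)$ through the separate equivalence $(2)\Leftrightarrow(3)$. Two structural hypotheses carry the argument. Since $\cC_2$ is closed under biduals, $F[N]^{**}$ is a genuine $\cC_2$-object which is visibly a dual Banach space, so it is a legitimate target in $(2)$--$(4)$, and the canonical inclusion $j\colon F[N]\To F[N]^{**}$ is a $\cC_2$-morphism (hence also a $\cC_1$-morphism, as $\cC_2\subseteq\cB\subseteq\cC_1$). The universal property of $F$ will be used to convert a $\cC_1$-morphism with values in a $\cC_2$-object into a $\cC_2$-morphism defined on the free object. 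Write $j=j_{F[N]}$, and let $j_Y$ denote the canonical inclusion $Y\To Y^{**}$ of any subspace $Y$.

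The heart of the matter is $(3)\Leftrightarrow(4)$. Given the $\cC_1$-morphism $h\colon M\To F[N]^{**}$ of $(3)$ with $h\circ i=j\circ\delta_N$, the universal property produces a unique $\cC_2$-morphism $T:=T_h\colon F[M]\To F[N]^{**}$ with $T\circ\delta_M=h$. Then $T\circ F[i]$ and $j$ are both $\cC_2$-morphisms $F[N]\To F[N]^{**}$, and precomposing with $\delta_N$ and using $F[i]\circ\delta_N=\delta_M\circ i$ together with $h\circ i=j\circ\delta_N$ shows $(T\circ F[i])\circ\delta_N=j\circ\delta_N$; uniqueness in the universal property forces $T\circ F[i]=j$, which is $(4)$. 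The converse is immediate on setting $h:=T\circ\delta_M$. This is the step I expect to be the main obstacle, but it is resolved entirely by the free functor: one \emph{cannot} pass from the ``merely $\cC_1$'' extension of $(3)$ to the $\cC_2$-morphism of $(4)$ by taking biduals of operators, because the bidual of a $\cB$-morphism is only a $\cB$-morphism and need not respect the extra $\cC_2$-operations. The universal property is precisely what upgrades $\cC_1$-data into a $\cC_2$-morphism.

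Next I treat $(2)\Leftrightarrow(3)$. For $(2)\Rightarrow(3)$, apply $(2)$ to the dual Banach space $Z^*=F[N]^{**}$ (a $\cC_2$-object by closedness under biduals) and to the $\cC_1$-morphism $f=j\circ\delta_N\colon N\To F[N]^{**}$; the resulting $h$ is exactly what $(3)$ asks for. For $(3)\Rightarrow(2)$, let $Z^*$ be a dual Banach space with $\cC_2$-structure and $f\colon N\To Z^*$ a $\cC_1$-morphism. The universal property lifts $f$ to a $\cC_2$-morphism $T_f\colon F[N]\To Z^*$ with $T_f\circ\delta_N=f$. Let $\pi\colon Z^{***}\To Z^*$ be the canonical norm-one projection, so that $\pi\circ T_f^{**}\circ j=T_f$. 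If $h_0\colon M\To F[N]^{**}$ is the morphism of $(3)$, then $h:=\pi\circ T_f^{**}\circ h_0$ is a composition of $\cC_1$-morphisms (the operators $\pi,T_f^{**}$ are $\cB$-morphisms, hence $\cC_1$-morphisms), and $h\circ i=\pi\circ T_f^{**}\circ j\circ\delta_N=T_f\circ\delta_N=f$, giving $(2)$.

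Finally the Banach-space content enters through Theorem~\ref{LemmaLocalCompFreeBanach} with $\cC_1=\cB$, i.e.\ the principle of local reflexivity. For $(4)\Rightarrow(1)$: since $j$ is an isometric embedding and $T\circ F[i]=j$, the operator $F[i]$ is bounded below, hence an isomorphic embedding onto its range $Y:=F[i](F[N])$; setting $S:=F[i]^{**}\circ T\colon F[M]\To Y^{**}$ and using the naturality identity $F[i]^{**}\circ j_{F[N]}=j_Y\circ F[i]$, one checks that $S$ extends $j_Y$, so Theorem~\ref{LemmaLocalCompFreeBanach}(2) gives that $Y$ is locally $\Vert F[i]\Vert\,\Vert T\Vert$-complemented. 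For $(1)\Rightarrow(3)$: local complementation of $Y$ yields, again by Theorem~\ref{LemmaLocalCompFreeBanach}(2), an operator $S\colon F[M]\To Y^{**}$ extending $j_Y$; since $F[i]\colon F[N]\To Y$ is an isomorphism, so is $F[i]^{**}$, and $h:=(F[i]^{**})^{-1}\circ S\circ\delta_M\colon M\To F[N]^{**}$ is a $\cC_1$-morphism for which the same naturality identity yields $h\circ i=j\circ\delta_N$. This closes $(1)\Leftrightarrow(3)\Leftrightarrow(4)$ and, with $(2)\Leftrightarrow(3)$, completes the equivalence.
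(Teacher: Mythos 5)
Your proof is correct, and it reorganizes the equivalence differently from the paper, which runs the single cycle $(1)\Rightarrow(2)\Rightarrow(3)\Rightarrow(4)\Rightarrow(1)$. The steps you share with the paper ($(2)\Rightarrow(3)$ by specializing to $Z^*=F[N]^{**}$, and $(3)\Rightarrow(4)$ via the uniqueness clause of the universal property) are essentially identical. The genuine divergence is in how condition $(2)$ is reached from condition $(1)$: the paper goes directly, using the inverse isomorphism $u\colon R\to F[N]$ of $F[i]$ onto its range and the extension characterization of local complementation (condition $(1)$ of Theorem~\ref{LemmaLocalCompFreeBanach}) to extend $T_f\circ u$ to a morphism $\phi$ on $F[M]$ and then setting $h=\phi\circ\delta_M$; you instead prove $(1)\Rightarrow(3)$ using only the bidual characterization (condition $(2)$ of Theorem~\ref{LemmaLocalCompFreeBanach}) and recover $(2)$ from $(3)$ by the elementary identity $\pi\circ T_f^{**}\circ j_{F[N]}=T_f$ with $\pi\colon Z^{***}\To Z^*$ the canonical projection. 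Your route has the advantage of leaning only on the ``operator into the bidual'' form of local complementation (so the compactness argument behind $(2)\Rightarrow(1)$ of Theorem~\ref{LemmaLocalCompFreeBanach} is never invoked here), and it is also more explicit than the paper in $(4)\Rightarrow(1)$, where the paper merely says to apply Theorem~\ref{LemmaLocalCompFreeBanach}(2) while you spell out the needed operator $S=F[i]^{**}\circ T$ extending $j_Y$ via the naturality identity $F[i]^{**}\circ j_{F[N]}=j_Y\circ F[i]$. The paper's route, in exchange, gets $(1)\Rightarrow(2)$ in one self-contained stroke without passing through biduals of $T_f$. Both arguments are sound.
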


\begin{proof}
	(1)$\Rightarrow$(2). We view $Z^*$ as an object of $\cC_2$. By the definition of $F[N]$ we have a $\cC_2$-morphism $T_f\colon F[N]\To Z^*$ such that $T_f \circ \delta_N= f$ and $\|T_f\|=\|f\|$. Let $R$ be the range of $F[i]$ and let $u\colon R\To F[N]$ be the inverse operator of $F[i]\colon F[N]\To R$.  Since $R$ is locally complemented in $F[M]$, there exists an operator $\phi\colon F[M]\longrightarrow Z^{*}$ so that $\phi(y)=T_f(u(y))$ holds for every $y\in R$. We define $h\colon M\To Z^\ast$ as $h = \phi\circ \delta_M$. Then, 
	$$h\circ i = \phi \circ \delta_M \circ i = \phi \circ F[i]\circ \delta_N = T_f \circ \delta_N = f$$
	as desired. Notice that $h$ is a morphism in $\cC_1$ since $\phi$ and $\delta_M$ are $\cC_1$-morphisms.
	
	(2)$\Rightarrow$(3) is immediate by taking $Z^*=F[N]^{**}$ and $f=j\circ \delta_N$  in  condition (2).
	
	(3)$\Rightarrow$(4). 
	Take $T_h\colon F[M] \longrightarrow F[N]^{**}$ be the $\cC_2$-morphism induced by $h$ by the universal property of the free object. %We consider also $g:=T_h \circ F[i] : F[N] \rightarrow F[N]^{**}$. Notice that $g$ is a morphism since it is a composition of morphisms.  
	We have that  $$T_h\circ F[i] \circ \delta_N=T_h \circ \delta_M \circ i = h \circ i = j \circ \delta_N.$$
	Applying now the uniqueness part of the universal property defining free objects, there is a unique $\cC_2$-morphism $T \colon F[N] \rightarrow F[N]^{**}$ that satisfies $T\circ \delta_N = j \circ \delta_N$. Since $j$ and  $T_h\circ F[i]$ both satisfy this, we have that 
	$T_h\circ F[i]= j$.
	
(4)$\Rightarrow$(1). First, notice that for every $x\in F[N]$, $\|x\| = \|TF[i](x)\| \leq \|T\| \|F[i](x)\|$ and this shows that $F[i]$ is an isomorphic embedding of Banach spaces. For the local complementation, just apply condition (2) of Theorem~\ref{LemmaLocalCompFreeBanach} in the category of Banach spaces, as $T$ is in particular an operator.
\end{proof}

\begin{cor}\label{theo:LocCompFree}
Under the same assumptions as in Theorem~\ref{TheoLocCompFreeObjects}, suppose that $N\subset M$ are Banach spaces. If $N$ is $\cC_1$-locally complemented in $M$, then $F[N]$ is locally complemented in $F[M]$. More precisely, if $i\colon N \rightarrow M$ is the inclusion, then $F[i]\colon F[N]\To F[M]$ is an embedding of Banach spaces with locally complemented range.	
\end{cor}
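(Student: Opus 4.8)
The plan is to reduce the whole statement to the equivalences already established in Theorem~\ref{TheoLocCompFreeObjects}. The conclusion we want — that $F[i]\colon F[N]\To F[M]$ is an isomorphic embedding with locally complemented range — is literally condition (1) of that theorem applied to the inclusion $i\colon N\To M$. Note first that $i$ is a $\cC_1$-morphism, since $\cB\subseteq\cC_1$ and $i$ is a bounded operator between Banach spaces. Hence it suffices to verify any one of the equivalent conditions (2), (3), (4) of Theorem~\ref{TheoLocCompFreeObjects}; I would aim for condition (2), as it matches the hypothesis most transparently, and then invoke the implication (2)$\Rightarrow$(1) proved there.

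Next I would unpack the hypothesis. Saying that $N$ is $\cC_1$-locally complemented in $M$ means, by the definition recorded after Theorem~\ref{LemmaLocalCompFreeBanach}, that there is some $\lambda\geq 1$ for which $N$ is $\cC_1$-locally $\lambda$-complemented in $M$; equivalently, the pair $N\subseteq M$ satisfies the four equivalent assertions of Theorem~\ref{LemmaLocalCompFreeBanach}. In particular assertion (1) of that theorem holds: for every dual Banach space $Z^*$ and every morphism $f\colon N\To Z^*$ in $\cC_1$ there exists a morphism $h\colon M\To Z^*$ in $\cC_1$ extending $f$, with $\|h\|\leq\lambda\|f\|$.

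Then I would observe that this already delivers condition (2) of Theorem~\ref{TheoLocCompFreeObjects}, and with room to spare. That condition only asks, for dual Banach spaces $Z^*$ that \emph{additionally} admit a $\cC_2$-structure, for a morphism $h\colon M\To Z^*$ in $\cC_1$ satisfying $h\circ i=f$, with no norm bound required. Since Theorem~\ref{LemmaLocalCompFreeBanach}(1) produces such an extension for \emph{all} dual Banach spaces, it certainly produces one for those carrying a $\cC_2$-structure; and the statement that $h$ extends $f$ is exactly the equation $h\circ i=f$. Thus condition (2) of Theorem~\ref{TheoLocCompFreeObjects} is satisfied, and (2)$\Rightarrow$(1) gives the corollary.

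There is essentially no obstacle here: the entire content is the careful alignment of the two characterizations. The local-complementation hypothesis is phrased (via Theorem~\ref{LemmaLocalCompFreeBanach}) in terms of extending $\cC_1$-morphisms into \emph{arbitrary} dual spaces, whereas Theorem~\ref{TheoLocCompFreeObjects}(2) only demands extensions into the smaller class of dual spaces admitting a $\cC_2$-structure, and it discards norm control entirely; so the former is strictly stronger than what is needed. The only minor points to keep in view are that $i$ is indeed a $\cC_1$-morphism and that the constant $\lambda$ from the hypothesis plays no role once we pass to condition (2).
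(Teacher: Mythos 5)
Your proposal is correct and follows essentially the same route as the paper: both verify condition (2) of Theorem~\ref{TheoLocCompFreeObjects} from the equivalent characterizations in Theorem~\ref{LemmaLocalCompFreeBanach} and then invoke (2)$\Rightarrow$(1). The only cosmetic difference is that you pass through assertion (1) of Theorem~\ref{LemmaLocalCompFreeBanach} while the paper cites assertion (2); since all four assertions there are equivalent, this is immaterial.
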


\begin{proof}
if $N$ is $\cC_1$-locally complemented in $M$, then from condition (1) in Theorem~\ref{LemmaLocalCompFreeBanach} we get condition (2) in Theorem~\ref{TheoLocCompFreeObjects} which in turn implies condition (1)  in Theorem~\ref{TheoLocCompFreeObjects}. 
\end{proof}

The previous corollary asserts that, given a Banach space $X$ and a subspace $Y\subseteq X$, if $Y$ is locally complemented in $X$ then the inclusion $i$ of $Y$ into $X$ induces an embedding $F[i]$ of $F[Y]$ into $F[X]$ with locally complemented range. As we mentioned in the introduction, in the Lipschitz-free case, this is a consequence of a result of Fakhoury. In the Banach lattice setting, we get that if $Y$ is locally complemented in $X$, then the free Banach lattice $FBL[Y]$ generated by $Y$, is locally complemented in the free Banach lattice $FBL[X]$ generated by $X$ through the natural ``inclusion''. The same applies to the free $p$-convex Banach lattices.

%It is proved in \cite[Theorem 2.3]{sofi} that the converse is true when $\cC_1=\cB$ and $\cC_2=\cM$. Consequently, if we denote by $\mathcal F(X)$ the \textit{Lipschitz-free Banach space} generated by $X$ (see, e.g., \cite{GK}), we have the following Corollary as a result of the aforementioned Theorem, Corollary \ref{theo:LocCompFree} and Theorem \ref{Liplocomp}.
%
%\begin{cor}
%Let $X$ be a Banach space and $Y$ be a subspace of $X$. The following assertions are equivalent:
%\begin{enumerate}
%\item $\mathcal F(Y)$ is locally complemented in $\mathcal F(X)$.
%\item $Y$ is locally complemented in $X$.
%\item $Y$ is Lipschitz-locally complemented in $X$.
%\end{enumerate}
%\end{cor}

\bigskip

\noindent
\textbf{Acknowledgements.} We are deeply grateful toward the anonymous referee for carefully reading the draft and writing an exhaustive  report which enabled us to correct some mistakes and the lack of formalism of the previous version of this work.

\end{document}